\documentclass[12pt]{amsart}
\usepackage[top=1in, bottom=1in, left=1in, right=1in]{geometry}
\usepackage{amsfonts}
\usepackage{amsmath}
\usepackage{comment}
\usepackage{amssymb}
\usepackage{bbm}
\usepackage{comment}
\usepackage{mathrsfs}
\numberwithin{equation}{section}
\usepackage{times}

\usepackage{siunitx}
\usepackage[usenames,dvipsnames]{color}
\usepackage{comment}
\usepackage{xcolor}
\usepackage{mathtools}
\usepackage{bm}
\usepackage{esvect}
\usepackage{hyperref}
\hypersetup{
    colorlinks = true,
linkcolor={black},
urlcolor={blue},
citecolor={blue},    
urlcolor = {blue},
citebordercolor = {0.33 .58 0.33},
 linkbordercolor = {0.99 .28 0.23},
 breaklinks=true}

 \pagestyle{myheadings}

\newcommand{\kommentar}[1]{}
\newcommand{\C}{\mathbb{C}}

\newcommand{\R}{\mathbb{R}}

\newcommand{\N}{\mathbb{N}}



\newtheorem{thm}{Theorem}[section]

\newtheorem{coro}[thm]{Corollary}

\newtheorem{rem}[thm]{Remark}

\newcommand{\dd}{\;\mathrm{d}}



\theoremstyle{remark}


\usepackage{geometry}
\geometry{a4paper, margin=1 in}

\title{An Equidistribution Result for Differences Associated to Square Pyramidal Numbers II}

\author{Anji Dong, Katerina Saettone, Kendra Song and Alexandru Zaharescu}
\address{
Anji Dong: Department of Mathematics,
University of Illinois Urbana-Champaign,
Altgeld Hall, 1409 W. Green Street,
Urbana, IL, 61801, USA}
\email{anjid2@illinois.edu}

\address{
Katerina Saettone: Department of Mathematics,
University of Illinois Urbana-Champaign,
Altgeld Hall, 1409 W. Green Street,
Urbana, IL, 61801, USA}
\email{kas18@illinois.edu}

\address{
Kendra Song: Department of Mathematics,
University of Illinois Urbana-Champaign,
Altgeld Hall, 1409 W. Green Street,
Urbana, IL, 61801, USA}
\email{kendras4@illinois.edu}

\address{
Alexandru Zaharescu: Department of Mathematics,
University of Illinois Urbana-Champaign,
Altgeld Hall, 1409 W. Green Street,
Urbana, IL, 61801, USA and Simion Stoilow Institute of Mathematics of the Romanian Academy, 
P. O. Box 1-764, RO-014700 Bucharest, Romania}
\email{zaharesc@illinois.edu}

\begin{document}
\keywords{Cannonball problem, exponential sums, discrepancy, equidistribution, Dirichlet series, arithmetic progressions. 
}
\subjclass{Primary: 11K06. Secondary: 11K38, 11B99, 11L07, 11M99}
\begin{abstract}
  This paper presents some new results concerned with uniform distribution properties associated with the sequence $(a_n)_{n\in\N}$, which is defined as the distance from the $n$-th square pyramidal number to the closest square. We also extend the results to arithmetic progressions.
\end{abstract}
\maketitle

\section{Introduction}\label{introduction}
The Cannonball problem, officially proposed by Lucas \cite{LucasQuestions} and solved by Watson \cite{WatsonCannonball}, asks which integers are both a square and a square pyramidal number. For more works related to the Cannonball problem, the reader is referred to Conway and Sloan \cite{conway1982}, \cite{conway1999}, Laub \cite{Laub1990}, Beeckmans \cite{beeckmans1994}, and Bennett \cite{bennett2002}.

In connection with the Cannonball problem, Paolo Xausa \cite{paolo} considered the sequence \hyperlink{https://oeis.org/A351830}{A351830} defined as follows:
\begin{align}\label{a_n first definition}
    a_{n} = \big\lvert P_{n} - y^{2}_{n} \big\rvert,
\end{align}
where $P_{n}$ is the $n$-th square pyramidal number defined as
\begin{align}
   P_n &= \sum_{i = 1}^n i^2= \frac{2n^3+3n^2+n}{6},\label{defn: square pyramidal number} 
\end{align} 
and $y^{2}_{n}$ is the closest square to $P_{n}$. Hence, $a_n$ is $0$ if and only if $P_n$ is a solution to the Cannonball problem, and this happens exactly when $n=0$, $1$, and $24$. Xausa also computed the first ten thousand elements of the sequence.

In \cite{equidistribution_paper_1}, the authors supplied an asymptotic formula for the average value of the sequence defined in \eqref{a_n first definition} and \eqref{defn: square pyramidal number}. 
To be precise, the authors proved the following result \cite[Thm\ 1.1]{equidistribution_paper_1}\hspace{0.1cm}:

\bigskip

\textit{
For any $x\geq 1$, define
\begin{align}\label{A(x) first definition}
A(x) := \frac{1}{x}\sum_{1 \leq n \leq x} a_{n}.
\end{align}
Then, the function $A(x)$ satisfies
\begin{align}\label{A(x) asymptotic}
    A(x) = \frac{1}{5\sqrt{3}}x^{3/2}+O(x^{17/12}).
\end{align}
}
\bigskip

The above result was also generalized in \cite{equidistribution_paper_1} to any $k$-th moment with $k\in\N$. In the present paper, we continue this investigation. A natural extension of the above theorem is to consider asymptotics similar to \eqref{A(x) first definition} and \eqref{A(x) asymptotic}, where $n$ runs over a given arithmetic progression modulo a positive integer $q$.

\begin{thm}\label{thm: A(x) and M(x) in AP} For any $b,q\in\N$ and any $x\geq 1$, define 
\begin{align}\label{A(x) in AP}
A(b,q,x) := \frac{1}{x}\sum_{\substack{1 \leq n \leq x\\n\equiv b\bmod q}} a_{n},
\end{align}
where $a_n$ is as in \eqref{a_n first definition} and \eqref{defn: square pyramidal number}. Then, $A(b,q,x)$ satisfies
\begin{align}\label{A(x) in AP asymptotic}
    A(b,q,x) = \frac{1}{5q\sqrt{3}}x^{3/2}+O\left(\frac{x^{17/12}}{q^{2/3}}\right).
\end{align}
\end{thm}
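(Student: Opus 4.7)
The plan is to follow the framework of \cite[Thm.\ 1.1]{equidistribution_paper_1}, tracking the modulus $q$ through every estimate. The starting point is the elementary identity
\begin{align*}
a_n = 2\sqrt{P_n}\,\|\sqrt{P_n}\| + O(1),
\end{align*}
where $\|\cdot\|$ denotes the distance to the nearest integer; this follows from writing $y_n = \sqrt{P_n} - \psi_n$ with $|\psi_n|\le 1/2$ and expanding $P_n - y_n^2 = (2\sqrt{P_n}-\psi_n)\psi_n$. Summing the $O(1)$ term over the progression contributes at most $O(x/q)$, which is absorbed in \eqref{A(x) in AP asymptotic}.

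Next I would parametrize the progression by $n=b+qm$, $0\le m\le M := (x-b)/q$, and insert the standard Fourier expansion
\begin{align*}
\|y\| = \frac14 - \frac{2}{\pi^2}\sum_{\substack{j\geq 1\\ j\text{ odd}}}\frac{\cos(2\pi j y)}{j^2}.
\end{align*}
The constant term contributes $\tfrac12\sum_{m\le M}\sqrt{P_{b+qm}}$. Since $\sqrt{P_{b+qm}} = (q^{3/2}/\sqrt 3)\,m^{3/2} + O(q^{3/2}m^{1/2})$, Euler--Maclaurin in $m$ yields $\tfrac12\sum_{m\le M}\sqrt{P_{b+qm}} = \tfrac{x^{5/2}}{5q\sqrt 3} + O(x^{3/2})$, which after dividing by $x$ gives the main term of \eqref{A(x) in AP asymptotic}.

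The heart of the argument lies in bounding the oscillatory contributions
\begin{align*}
S_j(b,q) := \sum_{0\le m\le M}\sqrt{P_{b+qm}}\,\cos\bigl(2\pi j\sqrt{P_{b+qm}}\bigr),
\end{align*}
weighted by $j^{-2}$ and summed over odd $j$. With $g(t) := j\sqrt{P_{b+qt}}$, one has $g'(t)\asymp jq^{3/2}t^{1/2}$ and $g''(t)\asymp jq^{3/2}t^{-1/2}$ on $t\in[1,M]$. After removing the amplitude $\sqrt{P_{b+qm}}$ by partial summation, the remaining exponential sum $\sum_m \exp(\pm 2\pi i g(m))$ is to be estimated by the same machinery as in \cite{equidistribution_paper_1} (van der Corput with a $B$-process/stationary-phase step), but keeping explicit every power of $q$ arising from the scaling $n\mapsto b+qm$. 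Truncating the Fourier series at a parameter $J$ balanced against the trivial bound on the tail should then deliver the target error $O(x^{17/12}/q^{2/3})$.

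The principal obstacle is retaining the correct $q$-dependence through the exponential-sum estimates. The parametrization by $m$ shortens every sum from length $x$ to length $M=x/q$ while amplitudes inherit powers of $q^{3/2}$; tracking these through the van der Corput analysis produces savings in $q$, but extracting the precise exponent $q^{-2/3}$ requires optimizing the truncation $J$ against the $q$-scaled second derivative $g''$, together with a verification that all implicit constants remain uniform in the residue class $b$. This balancing, and the uniformity in $b$, is the main technical step beyond the arguments of the unrestricted case.
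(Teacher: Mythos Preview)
Your route is genuinely different from the paper's. You expand $\|\sqrt{P_n}\|$ as a Fourier series and feed the resulting weighted exponential sums directly into a van der Corput estimate. The paper instead discretizes the range of $|\sqrt{P_n}-y_n|$ into $L$ buckets, splits $[1,x]$ into $M$ blocks, and counts points in each bucket via the discrepancy of the family $\{\sqrt{P_{n_i}}\}$ along the progression; the discrepancy is then bounded by Erd\H{o}s--Tur\'an together with the Kuipers--Niederreiter second-derivative exponential-sum bound, and three parameters $L,M,K$ are optimized to reach $O(x^{17/12}/q^{2/3})$. Both approaches ultimately rest on the same second-derivative input, but yours is more direct for the first moment, while the paper's discrepancy framework was inherited from the $k$-th moment argument of \cite{equidistribution_paper_1}.

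Your sketch, however, stops at the decisive point: you identify the balancing that should produce the exponent $q^{-2/3}$ as the ``principal obstacle'' but do not carry it out. In fact the $q$-saving does not emerge from the oscillatory sums the way you suggest. With $g''(t)\asymp jq^{3/2}t^{-1/2}$ on $t\le M=x/q$, the second-derivative test gives $\sum_{m\le M}e(g(m))\ll j^{1/2}x^{3/4}$ with \emph{no} residual power of $q$; after partial summation and summing $j^{-2}$ one obtains an oscillatory error $O(x^{9/4})$, hence $A(b,q,x)=\tfrac{1}{5q\sqrt{3}}x^{3/2}+O(x^{5/4})$ uniformly in $q$. This already implies the stated bound in the range $q\le x^{1/4}$ (indeed $x^{5/4}\le x^{17/12}/q^{2/3}$ there), while for $q\ge x^{1/4}$ the trivial estimate $A(b,q,x)\ll x^{3/2}/q\le x^{17/12}/q^{2/3}$ suffices. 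So the $q^{-2/3}$ is an interpolation artifact rather than something to be extracted from the van der Corput step. To make the proposal a proof you need to actually execute the second-derivative bound along the parametrized progression, combine it with the trivial bound as above, and check uniformity in $b$ (which is immediate since all estimates depend only on $n=b+qm\le x$).
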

\noindent\textbf{Remark.} Notice that the main term for $A(b,q,x)$ dominates the error term as long as $q\ll x^{1/4}$.

\begin{coro}\label{thm: asymptotic for dirichlet series of chi(n)a(n)}
    Let $q\in\N$, $q\geq 2$, and $a_n$ be as in \eqref{a_n first definition} and \eqref{defn: square pyramidal number}. Let $\chi$ be a Dirichlet character modulo $q$. Then, for any $x\geq 1$, 
\[
\sum_{1\leq n\leq x}a_n\chi(n)  =
\begin{cases}
\frac{\varphi(q)}{5q\sqrt{3}}x^{5/2}+O\left(\frac{\varphi(q)x^{29/12}}{q^{2/3}}\right), & \text{if } \chi = \chi_0, \\
O\left(\frac{\varphi(q)x^{29/12}}{q^{2/3}}\right), & \text{if } \chi \neq \chi_0.
\end{cases}
\]

\end{coro}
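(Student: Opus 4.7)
The plan is to derive Corollary \ref{thm: asymptotic for dirichlet series of chi(n)a(n)} as an essentially direct consequence of Theorem \ref{thm: A(x) and M(x) in AP} by decomposing the twisted sum into residue classes modulo $q$ and then invoking orthogonality of Dirichlet characters. The starting point is the identity
\[
\sum_{1\leq n\leq x} a_n \chi(n) \;=\; \sum_{b=1}^{q} \chi(b) \sum_{\substack{1 \leq n \leq x \\ n \equiv b\bmod q}} a_n,
\]
where each inner sum equals $x\cdot A(b,q,x)$ by \eqref{A(x) in AP}. Theorem \ref{thm: A(x) and M(x) in AP} then supplies, uniformly in $b$,
\[
\sum_{\substack{1\leq n\leq x \\ n\equiv b\bmod q}} a_n \;=\; \frac{x^{5/2}}{5q\sqrt{3}} \;+\; O\!\left(\frac{x^{29/12}}{q^{2/3}}\right).
\]

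Plugging this into the decomposition gives
\[
\sum_{1\leq n\leq x} a_n \chi(n) \;=\; \frac{x^{5/2}}{5q\sqrt{3}}\sum_{b=1}^{q}\chi(b) \;+\; O\!\left(\frac{x^{29/12}}{q^{2/3}}\sum_{b=1}^{q}|\chi(b)|\right).
\]
Since $\chi$ is supported on residues coprime to $q$ and has modulus $1$ there, the bound $\sum_{b=1}^{q}|\chi(b)|\leq\varphi(q)$ already produces the advertised error term $O(\varphi(q)x^{29/12}/q^{2/3})$ in both cases. If $\chi=\chi_0$, then $\sum_{b=1}^q\chi_0(b)=\varphi(q)$ yields the stated main term. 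If $\chi\neq\chi_0$, orthogonality of characters gives $\sum_{b=1}^q\chi(b)=0$, annihilating the main term and leaving only the error, which matches the second case of the statement.

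The main point to verify is that the error in Theorem \ref{thm: A(x) and M(x) in AP} is genuinely uniform in the residue class $b$; beyond that, no further analytic input is required, and the corollary drops out as a short bookkeeping argument combining partial summation by residue class with orthogonality.
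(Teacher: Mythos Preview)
Your proposal is correct and follows essentially the same route as the paper: decompose the twisted sum into residue classes, apply Theorem \ref{thm: A(x) and M(x) in AP} to each inner sum, and then use $\sum_{b\bmod q}\chi(b)=\varphi(q)$ or $0$ according as $\chi=\chi_0$ or not. Your explicit remark that only the $\varphi(q)$ residues coprime to $q$ contribute (since $|\chi(b)|\le 1$ and $\chi(b)=0$ otherwise) is exactly what justifies the $\varphi(q)$ in the error term, and the uniformity in $b$ that you flag is indeed implicit in the statement of Theorem \ref{thm: A(x) and M(x) in AP}.
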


A very desirable feature of the above results is the power savings in the error terms. One can exploit this feature to derive asymptotic formulas for some other related intricate sequences. As an example of such results, we prove the following: 
\begin{thm}\label{thm: asymptotic for bn}
    For any $n\in\N$, define $b_n = \sum_{d\mid n} a_d$, where $a_d$ is given by \eqref{a_n first definition} and \eqref{defn: square pyramidal number}. Then for any $x\geq 1$, we have 
    \begin{align}
    \sum_{1\leq n\leq x}b_n \left(1-\frac{n}{x}\right)&=\frac{2\zeta(5/2)x^{7/2}}{35\sqrt{3}}+O_\delta(x^{41/12+\delta}),
    \end{align}
    for any $\delta>0$, where $\zeta(5/2)$ is the value of the Riemann zeta function at $5/2$.
\end{thm}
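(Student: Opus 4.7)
The plan is to use the Dirichlet convolution identity $b_n=(\mathbf{1}\ast a)(n)$ together with a Perron--Mellin argument on the associated Dirichlet series
\[
B(s) := \sum_{n\ge 1}\frac{b_n}{n^s} = \zeta(s)\, A(s), \qquad A(s) := \sum_{n\ge 1}\frac{a_n}{n^s},
\]
where the series defining $A(s)$ converges absolutely on $\re(s)>5/2$.

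First, I would derive the analytic continuation of $A(s)$ past its abscissa of convergence. Starting from the integral representation $A(s) = s\int_1^\infty S(t)\,t^{-s-1}\,dt$ with $S(t):=\sum_{n\le t} a_n$, and inserting the asymptotic $S(t) = \tfrac{1}{5\sqrt{3}}\,t^{5/2}+O(t^{29/12})$ from Theorem~1.1 of \cite{equidistribution_paper_1}, I obtain
\[
A(s) = \frac{s}{5\sqrt{3}\,(s-5/2)} + H(s),
\]
where $H(s)$ is holomorphic on $\re(s)>29/12$ and has polynomial growth in $|\im(s)|$ on vertical lines. Hence $A(s)$ has a simple pole at $s=5/2$ with residue $1/(2\sqrt{3})$, and $B(s)$ inherits a simple pole there with residue $\zeta(5/2)/(2\sqrt{3})$.

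Next, I would apply the Mellin pair $(1-y)_+\leftrightarrow 1/(s(s+1))$ to the smoothed sum. After writing $1-n/x = (x-n)/x$ and using the natural integration in $x$, one arrives at the Perron-type identity
\[
\sum_{1\le n\le x} b_n\!\left(1-\frac{n}{x}\right) = \frac{1}{2\pi i}\int_{(c)}\frac{x^{s+1}\,B(s)}{s(s+1)}\,ds
\]
for any $c>5/2$. Shifting the line of integration to $\re(s)=29/12+\delta$ crosses only the simple pole of $B(s)$ at $s=5/2$, and its residue
\[
\res_{s=5/2}\frac{x^{s+1}\,B(s)}{s(s+1)} = \frac{x^{7/2}\,\zeta(5/2)/(2\sqrt{3})}{(5/2)(7/2)} = \frac{2\zeta(5/2)\,x^{7/2}}{35\sqrt{3}}
\]
produces the main term.

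Finally, the shifted integral is bounded by $O_\delta(x^{41/12+\delta})$: the $|s|^{-2}$ decay from $1/(s(s+1))$, the boundedness of $\zeta(s)$ on the line $\re(s)=29/12+\delta>1$, and the polynomial growth of $A(s)$ there combine to ensure absolute convergence, while the size of $x^{s+1}$ on the contour contributes $x^{41/12+\delta}$. The main obstacle will be securing a sufficiently sharp vertical-line bound on $A(s)$ inside the strip $29/12<\re(s)\le 5/2$; if the direct partial-summation representation does not yield such a bound, I would switch to a truncated Perron formula at height $T=x^{\eta}$ and estimate the truncation error by standard methods, absorbing any logarithmic losses into the $x^{\delta}$ slack.
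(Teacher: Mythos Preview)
Your approach is essentially the same as the paper's: both use the convolution identity $b_n=(a\ast\mathbf 1)(n)$, the smoothed Perron formula with kernel $1/(s(s+1))$, analytic continuation of the Dirichlet series of $(a_n)$ via the asymptotic for $\sum_{n\le x}a_n$, and a contour shift to $\re(s)=29/12+\delta$ picking up the residue at $s=5/2$. One small correction: the Mellin representation gives only $A(s)=O(|s|)$ on the shifted line, so the integrand decays like $|s|^{-1}$ and the vertical integral is \emph{not} absolutely convergent; your anticipated fallback---truncating at height $T$ and absorbing the resulting $\log T$ into the $x^{\delta}$---is precisely what the paper does.
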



\subsection*{Structure of the Paper}
The paper is organized as follows. We begin by introducing some standard notation in Section \ref{sec: general notations}. In Section \ref{sec: proof of thm: A(x) and M(x) in AP}, we prove Theorem \ref{thm: A(x) and M(x) in AP} using an argument similar to the proof in \cite[ Theorem 1.2]{equidistribution_paper_1}, which employs the Erd\"{o}s-Tur\'{a}n inequality and an equidistribution theorem of Kuipers and Niederreiter \cite{UniformDistributionofSequences}. Additionally, we establish an asymptotic formula for the twisted sum associated with the sequence $a_{n}$ in Corollary \ref{thm: asymptotic for dirichlet series of chi(n)a(n)}. Lastly, in Section \ref{sec: proof of thm: asymptotic for bn} we bring into play the associated Dirichlet series in order to finish the proof of Theorem \ref{thm: asymptotic for bn}.

\section{General Notations}\label{sec: general notations}
We employ some standard notation that will be used throughout the article.
\begin{itemize} 
\item The expressions $f(X)=O(g(X))$, $f(X) \ll g(X)$, and $g(X) \gg f(X)$ are equivalent to the statement that $|f(X)| \leqslant C|g(X)|$ for all sufficiently large $X$, where $C>0$ is an absolute constant. A subscript of the form $\ll_{\alpha}$ means the implied constant may depend on the parameter $\alpha$. Dependence on several parameters is indicated similarly, as in $\ll_{\alpha, \lambda}$.
\item Given a set $S$, the notation $\#S$ stands for the cardinality of $S$. 
\item For any $x\in\R$, the notation $\lfloor x\rfloor$ denotes the floor of $x$, which is the largest integer smaller than $x$. 
\item For any real $x\in\R$, $\{x\}$ denotes the fractional part of $x$, that is, $x-\lfloor x\rfloor$.
\item The notation $e(x)$ stands for $\exp(2\pi i x)$.
\end{itemize}

\section{Proof of Theorem \ref{thm: A(x) and M(x) in AP}}\label{sec: proof of thm: A(x) and M(x) in AP}
\begin{proof}
Let $M(b,q,x)=x\cdot A(b,q,x)$. Moreover, let $L\in\N$, which grows with $x$ and will be optimized later. Following the proof in \cite[Theorem 1.2]{equidistribution_paper_1}, we obtain 
    \begin{align}
         M(b,q,x) = \bigg( \frac{2}{\sqrt{3}} \bigg)\sum_{j=1}^{L/2} \bigg(\frac{j}{L}\bigg)\sum_{\substack{1\leq n\leq x\\n\equiv b\bmod q\\\frac{j-1}{L}<|\sqrt{P_n}-y_n|\leq\frac{j}{L}}} n^{\frac{3}{2}} + O\bigl( x^{\frac{3}{2}} \bigr).\label{A(b,q,x) expression}
    \end{align}
 Let $M\leq x$ be in $\N$, and define  \begin{align}\label{S j}
S_j := \sum_{\substack{1\leq n\leq x\\n\equiv b\bmod q\\\frac{j-1}{L}<|\sqrt{P_n}-y_n|\leq\frac{j}{L}}} n^{\frac{3}{2}}  = \sum_{0\leq \ell \leq M-1}S_{j,\ell, M},
\end{align}
where 
\begin{align}\label{S(j,ell)}
    S_{j,\ell, M} := \sum_{\substack{\frac{\ell x}{M} \leq n < \frac{(\ell+1)x}{M}\\n\equiv b\bmod q\\\frac{j-1}{L} \leq |\sqrt{P_n}-y_n| < \frac{j}{L}}}n^{\frac{3}{2}}.
\end{align}
By applying Equation (4.13) in \cite{equidistribution_paper_1}, we have
\begin{align}
    S_{j,\ell, M} 
    &= \sum_{\substack{\frac{\ell x}{M} \leq n < \frac{(\ell +1)x}{M}\\n\equiv b\bmod q\\\frac{j-1}{L} < \{\sqrt{P_n}\} \leq \frac{j}{L}}} \bigg(\frac{\ell x}{M}\bigg)^{\frac{3}{2}} + \sum_{\substack{\frac{\ell x}{M} \leq n < \frac{(\ell +1)x}{M}\\n\equiv b\bmod q\\1-\frac{j}{L} < \{\sqrt{P_n}\} \leq 1-\frac{j-1}{L}}} \bigg(\frac{\ell x}{M}\bigg)^{\frac{3}{2}} \notag\\
    &\quad+O\bigg(\sum_{\substack{\frac{\ell x}{M} \leq n < \frac{(\ell +1)x}{M}\\n\equiv b\bmod q\\ \frac{1}{2}-x^{-\frac{3}{4}} < \{\sqrt{P_n}\} \leq \frac{1}{2}+x^{-\frac{3}{4}}}} \bigg(\frac{\ell x}{M}\bigg)^{\frac{3}{2}} +\sum_{\substack{\frac{\ell x}{M} \leq n < \frac{(\ell +1)x}{M}\\n\equiv b\bmod q\\\frac{j-1}{L} <|\sqrt{P_n}-y_n| \leq \frac{j}{L}}}\frac{\ell^{\frac{1}{2}}x^{\frac{3}{2}}}{M^{\frac{3}{2}}}\bigg).\label{split Sjl sum}
\end{align}
Next, we concentrate on the first sum in \eqref{split Sjl sum}. We remark that
\begin{align*}
    \sum_{\substack{\frac{\ell x}{M} \leq n < \frac{(\ell +1)x}{M}\\n\equiv b\bmod q\\\frac{j-1}{L} < \{\sqrt{P_n}\} \leq \frac{j}{L}}} \bigg(\frac{\ell x}{M}\bigg)^{\frac{3}{2}} &= \bigg(\frac{\ell x}{M}\bigg)^{\frac{3}{2}} \# \bigg\{n \in \bigg[\frac{\ell x}{M}, \frac{(\ell + 1) x}{M}\bigg] \colon \{\sqrt{P_n}\} \in \bigg(\frac{j-1}{L}, \frac{j}{L}\bigg),n\equiv b\bmod q \bigg\}\\
    &= \bigg(\frac{\ell x}{M}\bigg)^{\frac{3}{2}} \bigg(\frac{x}{qLM}+O(D(\mathcal{U}_{\ell,M}))\bigg),
\end{align*}
where
\begin{align}
    \mathcal{U}_{\ell,M} = \bigg\{ \{\sqrt{P_{n_i}}\} \colon n_i = \bigg\lfloor \frac{\ell x}{M}\bigg\rfloor +v+ iq, \ 1 \leq i \leq \bigg\lfloor\frac{\lfloor\frac{x}{M}\rfloor-v}{q}\bigg\rfloor\bigg\}, \label{Family U definition}
\end{align}
and $0\leq v<q$ is the smallest integer such that $\lfloor \frac{\ell x}{M}\rfloor +v\equiv b\bmod q$ and  $D(\mathcal{U}_{\ell,M})$ represents the discrepancy of the family $\mathcal{U}_{\ell,M}$. Applying the same arguments to the second sum and error term in \eqref{split Sjl sum}, we deduce that
\begin{align}
     S_{j,\ell, M} 
    &= \bigg(\frac{\ell x}{M}\bigg)^{\frac{3}{2}} \cdot \frac{2x}{qLM}+O\bigg(\bigg(\frac{\ell x}{M}\bigg)^{\frac{3}{2}}D(\mathcal{U}_{\ell,M})+\frac{\ell^{\frac{3}{2}}x^{\frac{7}{4}}}{qM^{\frac{5}{2}}}+\frac{\ell^{\frac{1}{2}}x^{\frac{5}{2}}}{qLM^{\frac{5}{2}}}\bigg).\label{S(j,l,M) second version}
\end{align}
To bound $D(\mathcal{U}_{\ell,M})$, we apply the Erd\"{o}s-Tur\'{a}n inequality. Then, for any $K \geq 1$,
\begin{align}
    D(\mathcal{U}_{\ell,M}) \leq \bigg\lfloor\frac{\lfloor\frac{x}{M}\rfloor-v}{q}\bigg\rfloor\frac{1}{K+1} + 3\sum_{m=1}^K\frac{1}{m}\bigg|\sum_{1 \leq i \leq \left\lfloor\frac{\left\lfloor\frac{x}{M}\right\rfloor-v}{q}\right\rfloor} e(m\sqrt{P_{n_i}})\bigg|. \label{erdos turan application}
\end{align}
We now proceed to obtain an upper bound for the exponential sum in \eqref{erdos turan application}. By Kuipers and Niederreiter's result in \cite[Theorem 2.7]{UniformDistributionofSequences}, taking $h(x) := \sqrt{P_{x}}$, we have
\begin{align}
    \sum_{1 \leq i \leq \left\lfloor\frac{\left\lfloor\frac{x}{M}\right\rfloor-v}{q}\right\rfloor} &e(m\sqrt{P_{n_i}}) \notag\\
    &\leq \bigg(m\bigg|h'\bigg(\bigg\lfloor \frac{\ell x}{M}\bigg\rfloor+v+\left\lfloor\frac{\left\lfloor\frac{x}{M}\right\rfloor-v}{q}\right\rfloor q \bigg)-h'\bigg(\bigg\lfloor \frac{\ell x}{M}\bigg\rfloor+v\bigg)\bigg| + 2\bigg)\notag\\
    &\quad\times\bigg(4\bigg(mh''\bigg(\bigg\lfloor \frac{\ell x}{M}\bigg\rfloor+v+\left\lfloor\frac{\left\lfloor\frac{x}{M}\right\rfloor-v}{q}\right\rfloor q\bigg)^{-1/2}+3\bigg)\notag\\
    &\leq \bigg(m\bigg|h'\bigg(\bigg\lfloor \frac{(\ell +1)x}{M} \bigg\rfloor \bigg)-h'\bigg(\bigg\lfloor \frac{\ell x}{M}\bigg\rfloor\bigg)\bigg| + 2\bigg)\bigg(4\bigg(mh''\bigg(\bigg\lfloor \frac{(\ell +1)x}{M} \bigg\rfloor \bigg)\bigg)^{-1/2}+3\bigg),\label{3.7}
\end{align}
where the last inequality follows from the fact that $h'(x)$ is monotonically increasing and $h''(x)$ is monotonically decreasing for $x\geq 1$. Note that the bound in \eqref{3.7} is exactly the same as in \cite{equidistribution_paper_1}, and thus we obtain 
\begin{align}
     D(\mathcal{U}_{\ell,M})
    &\ll \frac{x}{qKM} + \frac{K^{\frac{1}{2}}x^{\frac{3}{4}}}{\ell^{\frac{1}{4}}M^{\frac{3}{4}}}+\frac{K\sqrt{x}}{\sqrt{M\ell}}+\frac{\ell^{\frac{1}{4}}x^{\frac{1}{4}}}{K^{\frac{1}{2}}M^{\frac{1}{4}}}.\label{discrepancy bound 2}
\end{align}
Combining  \eqref{S(j,l,M) second version} and \eqref{discrepancy bound 2}, we find that
\begin{align*}
S_{j,\ell, M} = \frac{2x^{\frac{5}{2}}\ell^{\frac{3}{2}}}{qLM^{\frac{5}{2}}} + O\left( \frac{\ell^{\frac{3}{2}} x^{\frac{5}{2}}}{qK M^{\frac{5}{2}}} +
\frac{\ell^{\frac{5}{4}}x^{\frac{9}{4}}K^{\frac{1}{2}}}{M^{\frac{9}{4}}}+\frac{\ell x^2K}{M^2}+\frac{\ell^{\frac{7}{4}}x^{\frac{7}{4}}}{M^{\frac{7}{4}}K^{\frac{1}{2}}}+\frac{\ell^{\frac{3}{2}}x^{\frac{7}{4}}}{qM^{\frac{5}{2}}}+\frac{\ell^{\frac{1}{2}} x^{\frac{5}{2}}}{qLM^{\frac{5}{2}}}\right).
\end{align*}
We now substitute the above expression into \eqref{S j} to obtain
\begin{align}
    S_j = \frac{4x^{\frac{5}{2}}}{5qL} + O\left( \frac{ x^{\frac{5}{2}}}{qK } +
x^{\frac{9}{4}}K^{\frac{1}{2}}+x^2K+\frac{Mx^{\frac{7}{4}}}{K^{\frac{1}{2}}}+\frac{x^{\frac{7}{4}}}{q}+\frac{x^{\frac{5}{2}}}{qLM}\right).\label{S q version 2}
\end{align}
Finally, combining \eqref{A(b,q,x) expression} and \eqref{S q version 2},  and applying Euler-Maclaurin summation to the sum over $j$, we have
\begin{align*}
    M(b,q,x) &= \bigg( \frac{2}{\sqrt{3}} \bigg)\sum_{j=1}^{L/2} \bigg(\frac{j}{L}\bigg)\bigg(\frac{4x^{\frac{5}{2}}}{5qL} + O\bigg( \frac{ x^{\frac{5}{2}}}{qK } +
x^{\frac{9}{4}}K^{\frac{1}{2}}+x^2K+\frac{Mx^{\frac{7}{4}}}{K^{\frac{1}{2}}}+\frac{x^{\frac{7}{4}}}{q}+\frac{x^{\frac{5}{2}}}{qLM}\bigg)\bigg)+ O\bigl( x^{\frac{3}{2}} \bigr)\\
&=\frac{2}{\sqrt{3}L} \bigg(\frac{L^2}{8}+\frac{L}{4}+O(1)\bigg)\bigg(\frac{4x^{\frac{5}{2}}}{5qL} + O\bigg( \frac{ x^{\frac{5}{2}}}{qK } +
x^{\frac{9}{4}}K^{\frac{1}{2}}+x^2K+\frac{Mx^{\frac{7}{4}}}{K^{\frac{1}{2}}}+\frac{x^{\frac{7}{4}}}{q}+\frac{x^{\frac{5}{2}}}{qLM}\bigg)\bigg)\\
&=\bigg(\frac{L}{4\sqrt{3}}+O(1)\bigg)\bigg(\frac{4x^{\frac{5}{2}}}{5qL} + O\bigg( \frac{ x^{\frac{5}{2}}}{qK } +
x^{\frac{9}{4}}K^{\frac{1}{2}}+x^2K+\frac{Mx^{\frac{7}{4}}}{K^{\frac{1}{2}}}+\frac{x^{\frac{7}{4}}}{q}+\frac{x^{\frac{5}{2}}}{qLM}\bigg)\bigg)\\
&=\frac{x^{\frac{5}{2}}}{5q\sqrt{3}}+O\left(\frac{x^{\frac{5}{2}}}{qL}+\frac{Lx^{\frac{5}{2}}}{qK}+Lx^{\frac{9}{4}}K^{\frac{1}{2}}+Lx^2K+\frac{MLx^{\frac{7}{4}}}{K^{\frac{1}{2}}}+\frac{Lx^{\frac{7}{4}}}{q}+\frac{x^{\frac{5}{2}}}{qM}\right).
\end{align*}
Taking $M=\frac{K^{1/4}x^{3/8}}{q^{1/2}L^{1/2}}$, we have
\begin{align*}
    M(b,q,x)=\frac{x^{5/2}}{5q\sqrt{3}}+O\left(\frac{x^{5/2}}{qL}+\frac{Lx^{5/2}}{qK}+Lx^{9/4}K^{1/2}+Lx^2K+\frac{Lx^{7/4}}{q}+\frac{L^{1/2}x^{17/8}}{K^{1/4}q^{1/2}}\right).
\end{align*}
To optimize the above result, we let
\[
L= \min\left\{K^{1/2},\frac{x^{1/8}}{q^{1/2}K^{1/4}}, \frac{x^{1/4}}{q^{1/2}K^{1/2}},x^{3/8},\frac{K^{1/6}x^{1/4}}{q^{1/3}} \right\}.
\]
We then have
\begin{align*}
    M(b,q,x)
    &=\frac{x^{5/2}}{5q\sqrt{3}}+O\left(\frac{x^{5/2}}{qK^{1/2}}+\frac{x^{19/8}K^{1/4}}{q^{1/2}}+\frac{x^{9/4}K^{1/2}}{q^{1/2}}+\frac{x^{17/8}}{q}+\frac{x^{9/4}}{q^{2/3}K^{1/6}}\right).
\end{align*}
Taking $K= \frac{x^{1/6}}{q^{2/3}}$, we finally obtain
\begin{align*}
    A(b,q,x)=\frac{1}{x}M(b,q,x)=\frac{x^{3/2}}{5q\sqrt{3}}+O\left(\frac{x^{17/12}}{q^{2/3}}\right).
\end{align*}
This completes the proof of Theorem \ref{thm: A(x) and M(x) in AP}.
\end{proof}
With Theorem \ref{thm: A(x) and M(x) in AP}, we can now prove Corollary \ref{thm: asymptotic for dirichlet series of chi(n)a(n)}.

\begin{proof}[Proof of Corollary \ref{thm: asymptotic for dirichlet series of chi(n)a(n)}]

Since $\chi$ is periodic with period $q$, we have
\begin{align*}
        \sum_{1\leq n\leq x}a_n\chi(n) &= \sum_{b\bmod q} \chi(b)\sum_{\substack{1\leq n\leq x\\n\equiv b\bmod q}}a_n\\
        &=\sum_{b\bmod q} \chi(b)\left(\frac{1}{5q\sqrt{3}}x^{5/2}+O\left(\frac{x^{29/12}}{q^{2/3}}\right)\right),
    \end{align*}
where the last equality follows from Theorem \ref{thm: A(x) and M(x) in AP}. Since
\[
\sum_{b\bmod q} \chi(b) =
\begin{cases}
\varphi(q), & \text{if } \chi = \chi_0, \\
0, & \text{if } \chi \neq \chi_0,
\end{cases}
\]
we have 
\begin{align}\label{sum of an*chi(n)}
\sum_{1\leq n\leq x}a_n\chi(n)  =
\begin{cases}
\frac{\varphi(q)}{5q\sqrt{3}}x^{5/2}+O\left(\frac{\varphi(q)x^{29/12}}{q^{2/3}}\right), & \text{if } \chi = \chi_0, \\
O\left(\frac{\varphi(q)x^{29/12}}{q^{2/3}}\right), & \text{if } \chi \neq \chi_0.
\end{cases}
\end{align}
This completes the proof of Corollary \ref{thm: asymptotic for dirichlet series of chi(n)a(n)}.

\end{proof}

\section{Proof of Theorem \ref{thm: asymptotic for bn}}\label{sec: proof of thm: asymptotic for bn}
\begin{proof}
    In order to estimate 
   \begin{align}
   B(x):= \sum_{1\leq n\leq x}b_n \left(1-\frac{n}{x}\right),
    \end{align}
     we start by associating it with a Dirichlet series and then apply Perron's formula. We begin with the observation that
\begin{align}\label{dirichlet convolution}
    b_{n} = \sum_{d | n} a_{d} = (a \ast 1)(n).
\end{align}
Let $s$ be any complex number, and define $F(s)$ and $B(s)$ as the Dirichlet series associated to the sequences $(a_{n})$ and $(b_n)$ respectively, that is, 
\begin{align}
    F(s) := \sum^{\infty}_{n=1} \frac{a_{n}}{n^{s}},\textrm{\quad and\quad} H(s) := \sum^{\infty}_{n=1} \frac{b_{n}}{n^{s}}.\label{def of F(s) and H(s)}
\end{align}
From \eqref{dirichlet convolution}, we have 
\begin{align}\label{H(s)=F(s)zeta(s)}
    H(s) = F(s) \zeta(s).
\end{align}
Thus, we wish to analyze $F(s)$ further. Using \cite[Equations\ (4.3), (4.4), and (4.6)]{equidistribution_paper_1}, we have 
\begin{align*}
    F(s)\leq \sum_{n=1}^\infty\frac{|a_n|}{n^s}\leq\sum_{n=1}^\infty \frac{2}{\sqrt{3}n^{s-\frac{3}{2}}},
\end{align*}
which converges when $\text{Re}(s)>5/2$. Since $\zeta(s)$ converges absolutely when $\text{Re}(s)>1$, we conclude that $H(s)$ converges absolutely when $\text{Re}(s)>5/2$. 

By the asymptotic formula for $A(x)$ provided by \cite[Thm\ 1.1]{equidistribution_paper_1}, we have
\begin{align}\label{approximation of (an)}
    \sum_{1\leq n\leq x}a_n -\sum_{1\leq n\leq x} \frac{1}{2\sqrt{3}}n^{3/2}\ll x^{29/12}. 
\end{align}
Defining $G(s)$ as
\begin{align}
    G(s) := \sum^{\infty}_{n=1} \frac{c_{n}}{n^{s}},
\end{align}
where $c_{n} = a_{n} - \frac{1}{2\sqrt{3}} n^{3/2}$, we then have 
\begin{align}
    F(s)=G(s)+\frac{1}{2\sqrt{3}}\zeta\left(s-\frac{3}{2}\right).\label{relation btw F(s) and G(s)}
\end{align}
With \eqref{approximation of (an)}, we have 
\begin{align}\label{sum of cn bound}
\sum_{1\leq n\leq x} c_n\ll X^{29/12}. 
\end{align}
Now for $s\in\C$ such that Re$(s)>29/12$, we apply partial summation to obtain
\begin{align*}
    \lim_{x\rightarrow\infty}\bigg|\sum^{x}_{n=1}\frac{c_n}{n^s}\bigg|&=\lim_{x\rightarrow\infty}\bigg|\bigg(\sum_{n=1}^xc_n\bigg)\frac{1}{x^{s}}-\int_{1}^x\bigg(\sum_{n=1}^tc_n\bigg)\bigg(-st^{-s-1}\bigg)\dd t\bigg|\\
    &\leq \lim_{x\rightarrow\infty}\bigg(\bigg|\bigg(\sum_{n=1}^xc_n\bigg)\frac{1}{x^{s}}\bigg|+\int_{1}^x\bigg|\bigg(\sum_{n=1}^tc_n\bigg)\bigg(-st^{-s-1}\bigg)\bigg|\dd t\bigg)\\
    &\ll\lim_{x\rightarrow\infty} x^{29/12-|s|}+|s|\int_1^x t^{17/12-|s|}\dd t\\
    &\ll \lim_{x\rightarrow\infty}x^{29/12-|s|},
\end{align*}
which converges when $\text{Re}(s) > 29/12$. Therefore, 
\[
G(s) = \sum_{n=1}^\infty \frac{c_n}{n^s}
\]
converges when $\text{Re}(s) > 29/12$, which is less than $5/2$. Together with the fact that both $G(s)$ and $\zeta(s-3/2)$ converge absolutely when $\text{Re}(s)> 5/2$, we conclude that $F(s)$ has a simple pole at $s=5/2$ with residue $\frac{1}{2\sqrt{3}}$. Therefore, using \eqref{H(s)=F(s)zeta(s)}, we conclude that $H(s)$ has analytic continuation to $\text{Re}(s)>29/12$ except a simple pole at $s=5/2$ with residue $\frac{\zeta(5/2)}{2\sqrt{3}}$. Next, we take a parameter $T\geq 1$, which will be optimized later, and fix a $\sigma_0 >5/2$. Upon applying Perron's Formula, we obtain 
\begin{align*}
 \sum_{1\leq n\leq x} b_n\left(1-\frac{n}{x}\right)& = \frac{1}{2\pi i}\int_{\sigma_0-i\infty}^{\sigma_0+i\infty}H(s)\frac{x^{s+1}}{s(s+1)}\dd s\\
 &=\frac{1}{2\pi i}\int_{\sigma_0-iT}^{\sigma_0+iT}H(s)\frac{x^{s+1}}{s(s+1)}\dd s+R(x,\sigma,T),
\end{align*}
where $R(x,\sigma,T)\ll x^{\sigma_0+1}/T$. Now choose a sufficiently small $\delta>0$, and let $\mathcal{C}$ be the incomplete contour connecting the points $\sigma_0+iT$, $\frac{29}{12}+\delta+iT$, $\frac{29}{12}+\delta-iT$, and $\sigma_0-iT$. Then by the residue theorem, we get
\begin{align*}
    \frac{1}{2\pi i}\int_{\sigma_0-iT}^{\sigma_0+iT}H(s)\frac{x^{s+1}}{s(s+1)}\dd s &= -\frac{1}{2\pi i}\int_{\mathcal{C}}H(s)\frac{x^{s+1}}{s(s+1)}\dd s+\text{Res}\left(\frac{H(s)x^{s+1}}{s(s+1)},s=\frac{5}{2}\right)\\
    &=-\frac{1}{2\pi i}\int_{\mathcal{C}}H(s)\frac{x^{s+1}}{s(s+1)}\dd s+\frac{2\zeta(5/2)x^{7/2}}{35\sqrt{3}}.
\end{align*}
Now let $s = \sigma+it$. Then 
\begin{align}
   \int^{\sigma_0+iT}_{\frac{29}{12}+\delta+iT}H(s)\frac{x^{s+1}}{s(s+1)}\dd s
   &= \int^{\sigma_0}_{\frac{29}{12}+\delta}F(\sigma+iT) \zeta(\sigma+iT)\frac{x^{\sigma+1+iT}}{(\sigma+iT)(\sigma+1+iT)}\dd \sigma\notag\\
    &=\int^{\sigma_0}_{\frac{29}{12}+\delta}G(\sigma+iT)\zeta(\sigma+iT)\frac{x^{\sigma+1+iT}}{(\sigma+iT)(\sigma+1+iT)}\dd \sigma\notag\\
   &\quad+\frac{1}{2\sqrt{3}}\int^{\sigma_0}_{\frac{29}{12}+\delta}\zeta(\sigma-\frac{3}{2}+iT)\zeta(\sigma+iT)\frac{x^{\sigma+1+iT}}{(\sigma+iT)(\sigma+1+iT)}\dd \sigma.\label{sum of two integrals}
\end{align}
To evaluate the first integral on the right-hand side of \eqref{sum of two integrals}, recall that $G(s)$ converges when $s>7/3$. Thus, applying Mellin transform, with $\sigma\in (29/12+\delta, \sigma_0)$,
\begin{align}\label{mellin transform applied}
    G(\sigma+iT) = (\sigma+iT)\int_{1}^\infty \frac{\sum_{n=1}^xc_n}{x^{\sigma+1+iT}}dx\ll T,
\end{align}
so the first integral on the far right side of \eqref{sum of two integrals} is 
\[
\ll T\int_{\frac{29}{12}+\delta}^{\sigma_0}\left|\frac{x^{\sigma+1+iT}}{(\sigma+iT)(\sigma+1+iT)}\zeta(\sigma+iT)\right|\dd \sigma\ll T\int_{\frac{29}{12}+\delta}^{\sigma_0}\frac{x^{\sigma+1}}{T^2} \dd \sigma\ll \frac{x^{\sigma_0+1}}{T\log x}.
\]
Similarly, by known bounds for the Riemann zeta function in the critical strip (for example, see Equa. (5.20) in \cite{IwaniecKowalski}), the second integral on the far right-hand side of \eqref{sum of two integrals} is
\[
\ll \int_{\frac{29}{12}+\delta}^{\sigma_0}\left|\zeta(\sigma-\frac{3}{2}+iT)\zeta(\sigma+iT)\frac{x^{\sigma+1}}{T^2}\right|\dd \sigma\ll \frac{x^{\sigma_0+1}}{T\log x}.
\]
Similar integral estimates can be done for the remaining sub-paths of the contour $\mathcal{C}$. By symmetry, we have 
\begin{align*}
   \int_{\frac{29}{12}+\delta-iT}^{\sigma_0-iT}H(s)\frac{x^{s+1}}{s(s+1)}\dd s
   &= \int_{\frac{29}{12}+\delta}^{\sigma_0}F(\sigma-iT) \zeta(\sigma-iT)\frac{x^{\sigma+1-iT}}{(\sigma-iT)(\sigma+1-iT)}\dd \sigma\notag\\
   &\ll \frac{x^{\sigma_0+1}}{T\log x}.
\end{align*}
Next, by \eqref{relation btw F(s) and G(s)} and \eqref{mellin transform applied} with $T$ replaced by $t$, as well as by known bounds for the Riemann zeta function in the critical strip, we have
\[
\left|F\left(\frac{29}{12}+\delta+it\right)\right|\leq \left|G\left(\frac{29}{12}+\delta+it\right)\right|+\left|\frac{1}{2\sqrt{3}}\zeta\left(\frac{11}{12}+\delta+it\right)\right|\ll |t|+1. 
\]
Thus,
\begin{align*}
    \int_{\frac{29}{12}+\delta-iT}^{\frac{29}{12}+\delta+iT}H(s)\frac{x^{s+1}}{s(s+1)}\dd s &= \int_{-T}^{T}F\left(\frac{29}{12}+\delta+it\right) \zeta\left(\frac{29}{12}+\delta+it\right)\frac{x^{\frac{41}{12}+\delta+it}}{(\frac{29}{12}+\delta+it)(\frac{41}{12}+\delta+it)}dt\\
    &\ll_\delta x^{\frac{41}{12}+\delta}\int_{-T}^{T} \frac{1}{|t|+1}dt\ll_\delta x^{\frac{41}{12}+\delta}\log T.
\end{align*}
Combining all error terms, we have
\begin{align*}
    \sum_{1\leq n\leq x} b_n\left(1-\frac{n}{x}\right)& =\frac{2\zeta(5/2)x^{7/2}}{35\sqrt{3}}+E(x,\sigma,T),
\end{align*}
where 
\begin{align*}
    E(x,\sigma,T) \ll_\delta \frac{x^{\sigma_0+1}}{T}+x^{\frac{41}{12}+\delta}\log T.
\end{align*}
Take $T=x^{\frac{\sigma_0-29/12-\delta}{(1+\delta)/(1+\sigma_0)}}$,  we then obtain 
\[
\frac{x^{\sigma_0+1}}{T}\ll_\delta x^{41/12+2\delta}, \quad x^{\frac{41}{12}+\delta}\log T\ll_\delta x^{41/12+\delta},
\]
for any $\delta>0$. Therefore, $E(x,\sigma,T)\ll x^{41/12+\delta}$ for any $\delta>0$, and
Theorem \ref{thm: asymptotic for bn} follows.
\end{proof}
\begin{rem}
   The asymptotic formula for the Dirichlet series associated with $\sum_{1\leq n\leq x}a_n\chi(n)$ defined in Corollary \ref{thm: asymptotic for dirichlet series of chi(n)a(n)} can be obtained as follows. Since $|a_n\chi(n)|=|a_n|$, and by Theorem \ref{thm: asymptotic for bn}, $F(s)$, defined in \eqref{def of F(s) and H(s)}, converges absolutely when $\text{Re}(s)>5/2$, so does $F_\chi(s)$. 
Therefore, by the Mellin transform, we arrive at
\begin{align*}
  F_\chi(s) =  s\int_1^\infty \sum_{1\leq n\leq x}a_n\chi(n) x^{-s-1}dx,
\end{align*}
for any $s>5/2$. Substituting \eqref{sum of an*chi(n)} in the above expression, if $\chi=\chi_0$, 
\begin{align*}
    F_\chi(s) &= s\int_1^\infty \left(\frac{\varphi(q)}{5q\sqrt{3}}x^{5/2}+O\left(\frac{\varphi(q)x^{29/12}}{q^{2/3}}\right)\right) x^{-s-1}dx\\
    &=\frac{s\varphi(q)}{5q\sqrt{3}(s-5/2)}+O\left( \frac{s\varphi(q)}{(s - \frac{29}{12})q^{2/3}}  \right).
\end{align*}
Otherwise, when $\chi$ is not the principal character, 
\begin{align*}
    F_\chi(s) &=O\left( \frac{s\varphi(q)}{(s - \frac{29}{12})q^{2/3}}  \right).
\end{align*}
\end{rem}


\begin{thebibliography}{99}
\bibitem{beeckmans1994}
Beeckmans, L.  
\textit{Squares Expressible as Sum of Consecutive Squares},  
The American Mathematical Monthly, 1994, vol. 101, pp. 437--442.

\bibitem{bennett2002}
Bennett, M. A.  
\textit{Lucas’ Square Pyramid Problem Revisited},  
Acta Arithmetica, 2002, vol. 105.

\bibitem{conway1999}
Conway, J. H. and Sloane, N. J. A.  
\textit{Sphere Packings, Lattices and Groups},  
Springer, 1999, pp. 524--528.

\bibitem{conway1982}
Conway, J. H. and Sloane, N. J. A.  
\textit{Lorentzian forms for the Leech lattice},  
Bulletin (New Series) of the American Mathematical Society, 1982, vol. 6.

\bibitem{equidistribution_paper_1}
Dong, A., Saettone, K., Song, K., and Zaharescu, A.  
\textit{An Equidistribution Result for Differences Associated with Square Pyramidal Numbers},  
Submitted, preprint arXiv:2412.10097, 2024.

\bibitem{IwaniecKowalski}
Iwaniec, H. and Kowalski, E.  
\textit{Analytic number theory},  
American Mathematical Society Colloquium Publications, vol. 53, American Mathematical Society, Providence, RI, 2004, pp. 100.

\bibitem{Laub1990}
Laub, M.  
\textit{Squares Expressible as a Sum of n Consecutive Squares},  
The American Mathematical Monthly, 1990, vol. 97, pp. 622--625.

\bibitem{LucasQuestions}
Lucas, F.  
\textit{Questions},  
Nouvelles annales de mathématiques: journal des candidats aux écoles polytechnique et normale, 2e série, vol. 14, 1875, pp. 336--336.  
Available at \url{http://www.numdam.org/item/NAM_1875_2_14__336_0/}.

\bibitem{nicolasrobin1983}
Nicolas, J.-L. and Robin, G.  
\textit{Majorations explicites pour le nombre de diviseurs de N},  
Canadian Mathematical Bulletin, vol. 26, 1983, pp. 485--492.

\bibitem{paolo}
Xausa, P.  
\textit{A351830},  
Published electronically at \url{https://oeis.org/A351830/b351830.txt}, 2024.

\bibitem{UniformDistributionofSequences}
Kuipers, I. and Niederreiter, H.  
\textit{Uniform Distribution of Sequences},  
Wiley-Interscience, 1974.

\bibitem{WatsonCannonball}
Watson, G. N.  
\textit{The problem of the square pyramid},  
Messenger of Mathematics, 1918, vol. 48, pp. 1--22.

\end{thebibliography}
\end{document}